\documentclass[12pt,oneside]{amsart}
\usepackage{amscd,amsmath,amssymb,amsfonts,a4}
\usepackage{hyperref}
\usepackage{color}



\newlength{\rulebreite}


\def\timesover#1#2#3{\ \xymatrix@1@=0pt@M=0pt{ _{#1}&\times&_{#2} \\& ^{#3}&}\ }
\def\otimesover#1#2#3{\ \xymatrix@1@=0pt@M=0pt{ _{#1}&\otimes&_{#2} \\& ^{#3}&}\ }

\usepackage[all]{xy}
\theoremstyle{plain}
\newtheorem{thm}{Theorem}

\theoremstyle{definition}

\newtheorem{rmk}[thm]{Remark}

\newtheorem{claim}[thm]{Claim}
\numberwithin{thm}{section}
\numberwithin{equation}{section}

\marginparwidth5ex
\newcommand{\ga}[2]{
\begin{gather}\label{#1}#2\end{gather} 
}

\newcommand{\surj}{\twoheadrightarrow}

\newcommand{\Spec}{{\rm Spec \,}}


\newcommand{\sC}{{\mathcal C}}

\newcommand{\sF}{{\mathcal F}}

\newcommand{\sL}{{\mathcal L}}
\newcommand{\sM}{{\mathcal M}}

\newcommand{\sO}{{\mathcal O}}
\newcommand{\sP}{{\mathcal P}}

\newcommand{\sT}{{\mathcal T}}

\newcommand{\sX}{{\mathcal X}}
\newcommand{\sY}{{\mathcal Y}}
\newcommand{\sZ}{{\mathcal Z}}

\newcommand{\F}{{\mathbb F}}

\newcommand{\N}{{\mathbb N}}
\renewcommand{\P}{{\mathbb P}}
\newcommand{\Q}{{\mathbb Q}}

\newcommand{\Z}{{\mathbb Z}}

\begin{document}

\title[Congruence and coniveau]{Congruence for rational points over finite fields and coniveau over local fields}
\author{H\'el\`ene Esnault}
\address{
Universit\"at Duisburg-Essen, Mathematik, 45117 Essen, Germany}
\email{esnault@uni-due.de}
\author{Chenyang Xu}
\address{Department of Mathematics, Princeton University, Princeton NJ 08544, USA}
\email{chenyang@math.princeton.edu}
\date{June 6, 2007}
\thanks{Partially supported by  the DFG Leibniz Preis and the Amercian Institute for Mathematics}
\begin{abstract}
If the $\ell$-adic cohomology of a projective smooth variety,
defined over a local field $K$ with finite residue field $k$, is supported in codimension $\ge 1$, 
then every model over the ring of integers of $K$ has a $k$-rational point. For $K$ a $p$-adic field, this is \cite[Theorem~1.1]{Ereg}. If the model $\sX$ is regular, one has a congruence $|\sX(k)|\equiv 1 $ modulo $|k|$ for the number of $k$-rational points (\cite[Theorem~1.1]{Ept}). The congruence is violated if one drops the regularity assumption. 

\end{abstract}
\maketitle
\section{Introduction}
Let $X$ be a projective variety defined over a local field $K$ with finite residue field $k=\F_q$. Let $R$ be the ring of integers of $K$. A {\it model} of $X/K$ is a flat projective morphism 
$\sX \to {\rm Spec}(R)$, with $\sX$ an integral scheme,  such that tensored with $K$ over $R$, it is $X\to {\rm Spec}(K)$. 
As in \cite{Ept} and \cite{Ereg},  we  consider $\ell$-adic cohomology $H^i(\bar X)$ with $\Q_\ell$-coefficents. Recall briefly that one defines the first coniveau level 
\ga{}{N^1H^i(\bar X)=\{\alpha \in H^i(\bar X), \exists \ {\rm divisor} \ D\subset X \ {\rm s.t.} \ 
0=\alpha|_{X\setminus D}\in H^i(\overline{X\setminus D})\}.\notag}
As $H^i(\bar X)$ is a  finite dimensional $\Q_\ell$-vector space, one has by localization
\ga{}{\exists D \subset X \ {\rm s.t.} \ N^1H^i(\bar X)= {\rm Im}\big(H^i_{\bar D}(\bar X)\to H^i(\bar X)\big),\notag}
where $D\subset X$ is a divisor. One says that $H^i(\bar X)$ is {\it supported in codimension 1} if $N^1H^i(\bar X)=H^i(\bar X)$. The purpose of this note is twofold. We show the following theorem. 
\begin{thm} \label{thm1.1}
 Let $X$ be a smooth,  projective, absolutely irreducible variety defined over a local field $K$   with finite residue field $k$. Assume that $\ell$-adic cohomology $H^i(\bar X)$
is supported in codimension $\ge 1$ for all $i\ge 1$. Let $\sX$ be a model of $X$ over the ring of integers $R$ of $K$. Then  there is a 
projective surjective morphism $\sigma: \sY\to \sX$ of $R$-schemes  such that 
 $$|\sY(k)|\equiv 1 \ {\rm mod} \ |k|.$$ 
In particular, any model $\sX/R$ of $X/K$ has a $k$-rational point. 
\end{thm}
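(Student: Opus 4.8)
\noindent\textit{Proof strategy.}
I would split the statement into a formal part and the real content. For the formal part, note that a $k$-point of any $R$-scheme factors through its closed fibre, so for any morphism $\sigma\colon\sY\to\sX$ of $R$-schemes, $|\sY(k)|\equiv 1\bmod|k|$ already forces $\sY(k)\neq\emptyset$ and hence $\sX(k)\neq\emptyset$; thus the last clause of the theorem is a consequence of the congruence. Conversely, once we know that $\sX$ has a $k$-point $x$, choose the least integer $m\ge 0$ with $m\equiv 1-|\sX(k)|\bmod|k|$ and set $\sY:=\sX\sqcup\big(\coprod_{j=1}^{m}\Spec k\big)$, with $\sigma$ equal to the identity on the first component and to $x$ on each copy of $\Spec k$. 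A finite disjoint union of projective $\sX$-schemes is projective over $\sX$, the first component makes $\sigma$ surjective, and $|\sY(k)|=|\sX(k)|+m\equiv 1\bmod|k|$. So everything reduces to producing a $k$-rational point on the given model $\sX$, i.e.\ to extending \cite[Theorem~1.1]{Ereg} to an arbitrary local field $K$.

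If $\sX$ is regular this is already \cite[Theorem~1.1]{Ept} (the hypothesis on $H^*(\bar X)$ being in force); but $\sX$ need not be regular, resolution of singularities is unavailable over $R$ in general, and the coniveau property of the generic fibre is not inherited by generically finite alterations, so one cannot simply replace $\sX$ by a regular alteration and invoke \cite{Ept}. Instead I would use de Jong's theorem on alterations, together with Gabber's refinement, to construct a proper cubical hyperresolution $\sX_\bullet\to\sX$ over $R$ all of whose terms $\sX_\alpha$ are regular, flat and projective over $R$. Proper cohomological descent identifies $R\Gamma(\overline{\sX_k},\Q_\ell)$ with the total complex of the $R\Gamma(\overline{\sX_{\alpha,k}},\Q_\ell)$; as the indexing cube is finite, the Grothendieck--Lefschetz trace formula gives
\[
|\sX(k)|=|\sX_k(k)|=\sum_\alpha(-1)^{|\alpha|-1}\,|\sX_{\alpha,k}(k)|\in\Z,
\]
so it suffices to control each $|\sX_{\alpha,k}(k)|$ modulo $p$.

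For a single regular $\sX_\alpha$ with normal crossings closed fibre one has $|\sX_{\alpha,k}(k)|=|(\sX_{\alpha,k})_\red(k)|$, and modulo $p$ this number is governed, via the comparison of Berthelot--Bloch--Esnault, by the Witt-vector cohomology of $(\sX_{\alpha,k})_\red$ --- equivalently by the slope-$<1$ part of its rigid cohomology --- which through the Mayer--Vietoris spectral sequence of the stratification is read off the smooth projective strata and the dual complex. The geometric input is that, $H^*(\bar X)$ being supported in codimension $\ge 1$, a decomposition of the diagonal of $X$ in the style of Bloch--Srinivas spreads out over $R$: the strata coming from the original model are then $CH_0$-trivial, contributing nothing in positive degrees to Witt-vector cohomology, and the relevant dual complexes are $\Q$-acyclic, while the extra strata introduced by the alterations are controlled using the birational geometry of degenerations of rationally connected varieties. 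Assembling this in the displayed alternating sum yields $|\sX(k)|\not\equiv 0\bmod p$, hence a $k$-point of $\sX$, and then the formal step above produces $\sigma\colon\sY\to\sX$ with $|\sY(k)|\equiv 1\bmod|k|$.

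The step I expect to be the main obstacle is precisely the transport of the coniveau information from $X$ to the terms $\sX_\alpha$: their generic fibres are only dominated by, not birational to, $X$, so the diagonal decomposition must be spread over the whole model and restricted with care to the various closed fibres, and the contribution of the exceptional loci of the alterations --- about which the coniveau of $X$ says nothing directly --- has to be pinned down using that these loci lie over the singular locus of $\sX$, which is confined to the closed fibre, together with dual-complex techniques for degenerations of rationally connected varieties. A subsidiary point is the equal-characteristic case $K=\F_q((t))$, where one works with de Jong alterations over $\F_q[[t]]$ and with rigid and Witt-vector cohomology of $\F_q$-schemes; this is mostly a matter of translation. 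I expect the combinatorial bookkeeping --- the exactness of the alternating sum modulo $p$, and in the regular case the sharpening to the full congruence modulo $|k|$ --- to be routine once the slope vanishing and the $\Q$-acyclicity statements are in hand.
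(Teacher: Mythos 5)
Your formal reduction is admissible for the literal statement: given a $k$-point of $\sX$ one can indeed take $\sY=\sX\sqcup\coprod_1^m\Spec k$. But note that this inverts the paper's logic --- there the congruence is proved for a specific flat alteration $\sY$ of $\sX$ (de Jong's \cite[Corollary~5.15]{deJ2}: $\sZ$ regular, $\sY=\sZ/G$, $K(\sX)\subset K(\sY)$ purely inseparable), and that congruence is the \emph{only} source of the rational point; there is no independent production of a point to feed into your trivial construction. So everything hangs on your proposed proof that $\sX(k)\neq\emptyset$, and that proof has two genuine gaps. First, the descent/point-counting step: a cubical hyperresolution of $\sX$ with terms regular, flat and projective over $R$ would require resolution of singularities over the discrete valuation ring, which is not available; de Jong only supplies alterations, generically finite of degree possibly $>1$ (and over a general local field the induced extension of function fields may even be purely inseparable). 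For such covers the displayed identity $|\sX(k)|=\sum_\alpha(-1)^{|\alpha|-1}|\sX_{\alpha,k}(k)|$ is simply false (already for a single finite \'etale double cover), and it cannot be repaired by trace maps precisely because of the possible inseparability --- this is exactly the difficulty the paper isolates: it abandons the trace argument of \cite{Ereg} and instead analyzes the Leray spectral sequence of the birational map $\tau:Y\to X_1$ via the cone $\sC$ and diagram \eqref{2.6}, combining Deligne's integrality theorem \cite[Theorem~1.5~and~Appendix]{Ept} with purity to show the Frobenius eigenvalues on $H^i(Y^u)$, $i\ge 1$, lie in $q\cdot\bar\Z$, and then applies Grothendieck--Lefschetz to the special fibre of $\sY$ itself.

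Second, the geometric input you invoke is not implied by the hypothesis. The assumption is purely cohomological (coniveau $\ge 1$ on $H^i(\bar X)$ for $i\ge 1$); the implication from this to a Bloch--Srinivas decomposition of the diagonal, or to $CH_0$-triviality of $X$, is the \emph{converse} of Bloch--Srinivas and is an open problem in general, so your ``spread out the decomposition over $R$'' step has no starting point. Likewise nothing in the hypotheses makes $X$ (or any stratum) rationally connected, so the appeal to degenerations of rationally connected varieties and to dual-complex acyclicity has nothing to latch onto; the Witt-vector/slope$<1$ comparison would need precisely this unavailable Chow-theoretic control of the special fibre. The paper uses the coniveau hypothesis only through Deligne's integrality theorem (divisibility of Frobenius eigenvalues on cohomology of the generic fibre in degrees $\ge 1$), which transports to the model through the specialization sequence \eqref{2.2}; some version of that mechanism, rather than diagonal decompositions, is what your argument would need.
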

This generalizes \cite[Theorem~1.1]{Ereg} where the theorem is proven under the assumption that $K$ has characteristic 0. On the other hand, assuming that $\sX$ is regular, we showed in \cite[Theorem~1.1]{Ept} that the number of $k$-rational points $|\sX(k)|$ is congruent to $1$  modulo $|k|$. It was in fact the way to show that $k$-rational points exist on $\sX$, as surely $|k|$, being a $p$-power, where $p$ is the characteristic of $k$, is $>1$. We show that if we drop the regularity assumption, there are models which, according to Theorem \ref{thm1.1}, have a rational point, but do not satisfy the congruence.
\begin{thm} \label{thm1.2}
Let $X_0=\P^2$ over $K_0:=\Q_p$ or $\F_p((t))$. Then there is a finite field extension $K\supset K_0$, which can be chosen to be unramified, and there is a normal model $\sX/R$ of $X:=X_0\otimes_{K_0} K$, such that $|\sX(k)|$ is not congruent to $1$ modulo $|k|$. 
\end{thm}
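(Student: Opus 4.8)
The plan is to produce a normal but singular model $\sX/R$ of $\P^2_K$ whose special fiber has a combinatorial structure forcing $|\sX(k)| \not\equiv 1 \bmod |k|$. The guiding principle from \cite{Ept} is that regularity is exactly what makes the congruence work (via a Lefschetz-type trace computation on a good model, or via the fact that the special fiber of a regular model has cohomology controlled by the generic fiber); so I want a model where the special fiber $\sX_k$ is reducible with components meeting in a way that makes the alternating sum of $|\sX_k^{\rm irr}(k)|$-type counts land in the wrong residue class. Concretely, start from $\P^2$ over $R_0$ (the ring of integers of $K_0$), blow up a suitable closed subscheme of the special fiber — e.g. a point or a line in $\P^2_k$ — to get a regular model with special fiber a union of two surfaces ($\P^2$ blown up, plus a $\P^2$ or a Hirzebruch surface) glued along a curve, and then contract one of the components, or take a quotient/normalization, to obtain a \emph{normal} model $\sX$ whose special fiber is a single non-normal surface, or two components meeting non-transversally. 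The point is that $|\sX(k)| = |\sX_k(k)|$ and $\sX_k(k)$ can be computed by a stratification; choosing the combinatorics so that $|\sX_k(k)| \equiv c \bmod q$ with $c \neq 1$ finishes the argument.

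First I would fix notation: let $K_0 = \Q_p$ or $\F_p((t))$, $R_0$ its ring of integers, residue field $\F_p$. Pass to an unramified extension $K/K_0$ with residue field $k = \F_q$, $q = p^f$, chosen large enough that all the $k$-points I need to exhibit are actually rational (this is the role of the ``can be chosen unramified'' clause — no ramification is needed, we only enlarge the residue field). Second, I would write down the explicit model: take $\sX' \to \Spec R$ a chain of blow-ups of $\P^2_R$ supported on the closed fiber so that $\sX'$ is regular and $\sX'_k = \bigcup_j E_j$ is a reduced simple normal crossings divisor with known components $E_j$ (each a rational surface over $k$) and known intersection curves $E_i \cap E_j$. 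Third, and this is the heart of the construction, I would pass from $\sX'$ to a normal model $\sX$ by an \emph{anti-ample contraction}: choose a divisor on $\sX'$ that is relatively semiample over $R$ contracting exactly one boundary component $E_{j_0}$ down to a point or a curve, so that $\sX := \mathrm{Proj}$ of the associated ring is normal (Zariski's main theorem / Stein-type factorization guarantees normality) but no longer regular along the image of $E_{j_0}$. Its special fiber $\sX_k$ is then the image of $\sX'_k$, a surface over $k$ which I can stratify explicitly.

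The final step is the point count. Since $\sX \to \Spec R$ is proper and $R$ is Henselian with residue field $k$, $|\sX(k)| = |\sX_k(k)|$. Stratifying $\sX_k$ by the images of the strata of the SNC configuration $\sX'_k$ (open parts of the $E_j$, the double curves, the triple points), each stratum is a locally closed $k$-variety whose point count I know exactly (pieces of $\P^2$, $\A^1$, $\P^1$ over $k$), so $|\sX_k(k)|$ is an explicit polynomial in $q$. I would engineer the contraction so that the ``extra'' identifications (which is exactly what destroys regularity) change this count by a term that is $\not\equiv 0 \bmod q$ relative to the $1 \bmod q$ one would get from $\P^2$ itself — for instance, collapsing a $\P^1$-worth of the special fiber to a point removes $q$ points (harmless mod $q$) but gluing two sections together along a point, or producing a non-normal crossing, can shift the count by a constant like $-1$ or $+ (\text{number of branches}) - 1$. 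A clean target is $|\sX(k)| \equiv 2 \bmod q$ or $|\sX(k)| \equiv 0 \bmod q$. The main obstacle I anticipate is the \emph{simultaneous} demand that the contracted model be normal (not merely, say, seminormal or with worse singularities) while still having a special fiber whose point count is both computable and in the wrong residue class; getting normality typically forces the contracted locus to have small codimension and the singularities to be mild (e.g.\ rational), which tends to preserve the congruence, so the construction has to be chosen with care — most likely the right move is to contract along a relatively ample-on-one-component divisor whose contraction produces a single surface with a non-normal or pinch-point-type singularity on the special fiber, normal as a $3$-fold but with the special fiber's $k$-points organized so the Euler-characteristic-type count shifts. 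I would verify the residue class by a direct stratification count rather than by cohomological means, since the whole point is that the cohomological mechanism of \cite{Ept} fails here.
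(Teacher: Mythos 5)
There is a genuine gap, and it sits exactly at the step you call ``engineer the contraction so that the count lands in the wrong residue class.'' With the ingredients you allow yourself --- blow-ups of points or lines in the special fiber, so that every component $E_j$ is a rational surface and every double curve is a $\P^1$, followed by contraction of one component onto a point or a curve --- the congruence cannot be broken. Indeed $|\sX(k)|=|\sX'(k)|-|E_{j_0}(k)|+|a(E_{j_0})(k)|$, and a smooth rational surface, a $\P^1$, and a point all have $\equiv 1$ points mod $q$, so the shift is $\equiv 0 \bmod q$ and the count stays $\equiv 1$ (as it must be for the regular model $\sX'$ by \cite[Theorem~1.1]{Ept}). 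The operations you invoke to produce a shift by a constant --- ``gluing two sections together along a point,'' identifications creating extra branches --- change the count by $\pm 1$ precisely because they make the three-fold non-normal along the identified locus, which the theorem forbids; you acknowledge this tension (``normality \dots tends to preserve the congruence'') but offer no mechanism to resolve it. The missing idea is that the contracted locus must carry nontrivial weight-one cohomology: in the paper one puts an elliptic curve $E\subset\P^2_k$ in the special fiber, blows up the $\ge 12$ points of $E\cap C$ for a relative curve $\sC\subset\P^2_R$ of degree $\ge 4$ (so that the conormal bundle of the strict transform of $E$ becomes an extension of ample line bundles), contracts $E$ to a point by Artin's criterion, and the count shifts by $1-|E(k)|$; Claim~3.1 (Weil bounds plus duality on $H^1(\bar E)$) supplies the finite \emph{unramified} extension making $|E(k)|\not\equiv 1 \bmod |k|$ --- this, not ``making auxiliary points rational,'' is what the unramified extension is for.

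A second, independent gap is projectivity. A model is by definition a flat \emph{projective} $R$-scheme, and your plan waves at ``$\mathrm{Proj}$ of the associated ring'' of a relatively semiample divisor; but the contractions you could realize that way are exactly the harmless ones above, while a contraction of the sort that breaks the congruence is produced by Artin's theorems only as an algebraic space. In the paper the bulk of the work (Claims~3.2, 3.4, 3.5) is devoted to this: the vanishing $H^1(E,I^n/I^{n+1})=0$ coming from ampleness of the conormal bundle, descent of the line bundle $\sM=b^*\sO_{\sP}(\sC)(-P_\Sigma)$ to $\sX$ via Keel's theorem (using ${\rm Pic}$ of the Artinian thickenings $a(E_m)$ being trivial), and then ampleness of the descended bundle via Keel's criterion, Serre vanishing, base change and Nakayama. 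None of this is addressed, and without it you have neither a projective model nor, strictly speaking, a counterexample to the statement being proved.
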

The proof of Theorem \ref{thm1.1}  follows closely the one in unequal characteristic in \cite[Theorem~1.1]{Ereg}, and, aside of Deligne's integrality theorem \cite[Corollaire~5.5.3]{DeInt} and \cite[Appendix]{Ept} and purity \cite{Fu}, relies strongly on de Jong's alteration theorem as expressed in 
\cite{deJ2}. However, we have to replace the trace argument we used there by a more careful analysis of the Leray spectral sequence stemming from de Jong's construction. 
The construction of the examples in Theorem \ref{thm1.2} uses Artin's contraction theorem as expressed in \cite{artin} and is somewhat inspired by Koll\'ar's construction exposed in 
\cite[Section~3.3]{BE}.\\ \ \\
{\it Acknowledgements:} We thank Johan de Jong for his interest. 
\section{Proof of Theorem \ref{thm1.1}} 
This section is devoted to the proof of Theorem \ref{thm1.1}. 

Let $K$ be a local field  with finite residue field $k$. Let $R\subset K$ be its valuation ring. 
Let $\sX\to \Spec R$ be a model of a projective variety $X\to \Spec K$. We do not assume here that $X$ is absolutely irreducible, nor do we assume that $X/K$ is smooth. Then by \cite[Corollary~5.15]{deJ2}, there is a diagram
\ga{2.1}{\xymatrix{\ar[drr] \sZ\ar[r]^{\pi} & \ar[dr] \sY \ar[r]^{\sigma}   & \sX \ar[d]\\
  & & \Spec R
}
}
and a finite group $G$ acting on $\sZ$ over $\sY$
with the properties
\begin{itemize}
\item[(i)] $\sZ\to \Spec R$ and $\sY\to \Spec R$ are  flat,
\item[(ii)] $\sigma$ is projective, surjective, $K(\sX)\subset K(\sY)$ is a purely inseparable field extension,
\item[(iii)] $\sY$ is the quotient of $\sZ$ by $G$,
\item[(iv)] $\sZ$ is regular. 
\end{itemize}
We want to show that this $\sY$ does it. 
Let us set  $$Y=\sY\otimes K, \ Z=\sZ\otimes K.$$
The only difference with \cite[(2.1)]{Ereg} is that $K(\sX)\subset K(\sY)$ may be a purely inseparable extension rather than an isomorphism. Thus, the argument there breaks down as
 one does not have  traces as in  \cite[(2.3), (2.4)]{Ereg}. We do not have \cite[(2.5)]{Ereg} a priori, and we can't conclude \cite[Claim~2.1]{Ereg}. 

Let us overtake the notations of {\it loc. cit.}: we endow all schemes considered (which are   $R$-schemes) with the upper subscript $^u$ to indicate the base change $\otimes_R R^u$ or $\otimes_K K^u$, where $K^u\supset K$ is the maximal unramified extension, and $R^u\supset R$ is the normalization of $R$ in $K^u$. Likewise, we write $\overline{?}$
to indicate the base change $\otimes_R \bar R, \ \otimes_K \bar K, \ \otimes_k \bar k$, where $\bar K\supset K, \ \bar k\supset k$ are the algebraic closures and $\bar R\supset R$ is the normalization of $R$ in $\bar K$. 
We consider as in \cite[(2.1)]{Ept} the $F$-equivariant exact sequence (\cite[3.6(6)]{DeWeII})
\ga{2.2}{\ldots \to H^i_{\bar B}(\sY^u)\xrightarrow{\iota} H^i(\bar B)=H^i(\sY^u)\xrightarrow{sp^u} H^i(Y^u) \to \ldots, 
}
where  $F \in {\rm Gal}(\bar k/k)$ is the geometric Frobenius, and $B=\sY\otimes k$.
We have \cite[Claim~2.2]{Ereg}  unchanged: 
\begin{claim} \label{claim2.1}
The eigenvalues of the geometric Frobenius $F\in {\rm Gal}(\bar k/k)$  acting on
$\iota(H^i_{\bar B}(\sY^u))\subset H^i(\bar B)$ lie in $q\cdot \bar{\Z}$ for all $i\ge 1$.
\end{claim}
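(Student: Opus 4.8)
The plan is to transport the claim to the regular scheme $\sZ$ of de Jong's diagram, where purity applies, and there reduce it to Deligne's integrality theorem. Since $\sY=\sZ/G$, the morphism $\pi\colon\sZ\to\sY$ is finite surjective, so with $\Q_\ell$-coefficients $(\pi_\ast\Q_{\ell,\sZ})^G=\Q_{\ell,\sY}$. Applied over $\sY^u$ and to the closed subscheme $\bar B$, whose preimage in $\sZ^u$ is the special fibre $\bar C$ ($C:=\sZ\otimes k$), this gives $G$- and $F$-equivariant identifications $H^i(\bar B)=H^i(\bar C)^G$ and $H^i_{\bar B}(\sY^u)=H^i_{\bar C}(\sZ^u)^G$, realised by the injective maps $\pi^\ast$ and compatible with the localisation maps $\iota$. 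Hence $\pi^\ast$ embeds $\iota\bigl(H^i_{\bar B}(\sY^u)\bigr)$ $F$-equivariantly into $\iota_{\sZ}\bigl(H^i_{\bar C}(\sZ^u)\bigr)$, a quotient of $H^i_{\bar C}(\sZ^u)$, and it suffices to show that the eigenvalues of $F$ on $H^i_{\bar C}(\sZ^u)$ lie in $q\cdot\bar\Z$ for all $i\ge1$.

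For this I would argue by dévissage along the special fibre, as in \cite[Appendix]{Ept}. Cohomology with supports depends only on the reduced subscheme, so $H^i_{\bar C}(\sZ^u)=H^i_{\bar C_{\red}}(\sZ^u)$. Fix a $\Gal(\bar k/k)$-stable filtration of $\bar C_{\red}$ by closed subschemes whose successive locally closed strata $U_m$ are smooth over $\bar k$ of constant codimension $c_m\ge1$ in $\sZ^u$ — possible by repeatedly removing singular loci, the bound $c_m\ge1$ holding since $\bar C_{\red}$ lies in the special fibre, which is of codimension $1$ by flatness of $\sZ$ over $R$. Filtering $R\Gamma_{\bar C_{\red}}(\sZ^u,\Q_\ell)$ accordingly, excision exhibits the graded pieces as $R\Gamma_{U_m}(V_m,\Q_\ell)$ for the regular open $V_m\subset\sZ^u$ in which $U_m$ is closed; absolute purity \cite{Fu} then identifies this with $R\Gamma(U_m,\Q_\ell)(-c_m)[-2c_m]$, contributing $H^{i-2c_m}\bigl(U_m,\Q_\ell(-c_m)\bigr)$ to $H^i_{\bar C_{\red}}(\sZ^u)$. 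The twist $\Q_\ell(-c_m)$ scales each Frobenius eigenvalue by $q^{c_m}$, while the eigenvalues of $F$ on $H^\ast(U_m,\Q_\ell)$ are algebraic integers by Deligne's integrality theorem \cite[Corollaire~5.5.3]{DeInt} together with \cite[Appendix]{Ept}; hence every graded piece has $F$-eigenvalues in $q^{c_m}\bar\Z$, and since $c_m\ge1$ for all $m$, the eigenvalues of $F$ on $H^i_{\bar C_{\red}}(\sZ^u)$ lie in $q\bar\Z$. This recovers \cite[Claim~2.2]{Ereg} and, with the first step, the statement.

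I expect the main obstacle to be the bad geometry of the special fibre: $C$ is in general neither reduced nor a normal crossings divisor, so one cannot apply a clean purity isomorphism $H^i_{\bar C}(\sZ^u)\cong H^{i-2}(\bar C,\Q_\ell(-1))$ and must instead stratify $\bar C_{\red}$ into regular pieces — legitimate only because $\sZ^u$ is regular — and control Frobenius on the ordinary, not compactly supported, $\ell$-adic cohomology of a smooth but possibly non-proper variety, the point genuinely beyond Deligne's theorem supplied by \cite[Appendix]{Ept}. A subsidiary point is that the reduction from $\sY$ to $\sZ$ is available only with $\Q_\ell$-coefficients and must be checked compatible with $F$ and with $\iota$; it stands in for the norm argument of \cite[(2.3),(2.4)]{Ereg}, which is unavailable since $K(\sX)\subset K(\sY)$ is merely purely inseparable.
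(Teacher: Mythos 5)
Your proof is correct and is essentially the paper's own argument: the paper establishes this claim by invoking \cite[Claim~2.2]{Ereg}, whose proof is precisely your reduction $H^i_{\bar B}(\sY^u)=H^i_{\bar C}(\sZ^u)^G$ to the regular scheme $\sZ$ (available with $\Q_\ell$-coefficients since $\sY=\sZ/G$), followed by absolute purity on $\sZ^u$ and Deligne's integrality theorem together with the appendix of \cite{Ept}. The only cosmetic difference is that you spell out the stratification-and-purity d\'evissage for cohomology supported in the special fibre of a regular scheme instead of quoting it as a black box from \cite{Ept}/\cite{Ereg}.
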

So the problem is to show that the eigenvalues of $F$ acting on Im$(sp^u)\subset H^i(Y^u)$ lie in 
$q\cdot \bar{\Z}$ as well. Let us decompose the morphism $\sigma$ as 
\ga{2.3}{\sigma: Y\xrightarrow{\tau} X_1\xrightarrow{\epsilon} X}
where $X_1$ is the normalization of $X$ in $K(Y)$. Thus in particular, $\tau$ is birational, $\epsilon$ is finite and purely inseparable. Let us denote by $U\subset X$ a non-empty open such that $\tau|_{\epsilon^{-1}(U)}$ is an isomorphism, and let us set $D:= X\setminus U$.
We define
\ga{2.4}{\sC:={\rm cone}(\Q_\ell \to R\tau_*\Q_\ell)[-1]}
as an object in the bounded derived category of $\Q_\ell$-constructible sheaves on $X_1$. 
Since $\tau_*\Q_\ell=\Q_\ell$, the cohomology sheaves of $\sC$ are in degree $\ge 1$, and have support in $D_1:=D\times_X X_1$. We conclude
\ga{2.5}{H^i_{D_1^u}(X_1^u, \sC)=H^i(X_1^u, \sC) \ \forall i\ge 0.}
One  has the commutative diagram of exact sequences
\ga{2.6}{\xymatrix{ H^{i+1}_{ D^u_1}( X_1^u) \\ 
\ar[u] H^i_{{ D_1^u}}( X_1^u, \sC) \ar[r]^{= \eqref{2.5}} &
H^i(X_1^u, \sC) \\
\ar[u] H^i_{E^u}( Y^u) \ar[r] & \ar[u] H^i(Y^u)  \\
\ar[u] H^i_{D_1^u}(X_1^u) \ar[r] & \ar[u] H^i(X_1^u) 
}}
where $E= \sigma^{-1}(D)$. By \cite[Theorem~1.5~and~Appendix]{Ept}
 the eigenvalues of $F$ on $H^i( X^u)=H^i(X_1^u)$
and on $H^{i+1}_{ D^u_1}( X_1^u)=H^{i+1}_{D^u}(X^u)$ 
 lie in $q\cdot \bar{\Z}$.  For the latter cohomology, one has to argue again by purity on 
$X^u$ before applying {\it loc. cit.}:
by purity one is reduced to considering cohomology of the type $H^a(\Sigma^u)(-1)$ for a regular scheme $\Sigma$ over $K$ and $a\ge 0$.
 It remains to consider the eigenvalues of $F$ on $H^i_{E^u}( Y^u) =H^i_{L^u}(Z^u)^G$ where $L=D\times_X Z$. This is again the argument by purity and then  {\it loc. cit.}
So we conclude
\begin{claim} \label{claim2.2}
 The eigenvalues of the geometric Frobenius $F\in {\rm Gal}(\bar k/k)$  acting on $H^i(Y^u)$, and therefore on ${\rm Im}(sp^u)\subset H^i(Y^u)$, lie in $q\cdot \bar{\Z}$ for all $i\ge 1$.
\end{claim}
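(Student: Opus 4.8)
Following the discussion preceding the claim, the plan is to replace the trace argument of \cite[(2.3),(2.4)]{Ereg} --- unavailable here because $K(\sX)\subset K(\sY)$ is only purely inseparable, not an isomorphism --- by a direct analysis of the cone $\sC$ via purity, together with the diagram \eqref{2.6}, which presents $H^i(Y^u)$ as a successive extension of groups whose $F$-eigenvalues are already controlled. Let me call a finite-dimensional $\Q_\ell$-vector space with an endomorphism $F$ \emph{good} if every eigenvalue of $F$ on it lies in $q\cdot\bar{\Z}$; goodness passes to subspaces, quotients, middle terms of short exact sequences, and $G$-invariants, so it suffices to build $H^i(Y^u)$, for $i\ge 1$, out of good groups by these operations.

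First I would record the three good groups needed. (i) $H^i(X_1^u)$ is good for $i\ge 1$: the finite purely inseparable morphism $\epsilon$ is a universal homeomorphism, so $\epsilon^*\colon H^i(X^u)\xrightarrow{\sim}H^i(X_1^u)$, and $H^i(X^u)$ is good by \cite[Theorem~1.5~and~Appendix]{Ept}, since $H^i(\bar X)$ is supported in codimension $\ge 1$ by hypothesis. (ii) $H^{i+1}_{D_1^u}(X_1^u)=H^{i+1}_{D^u}(X^u)$ is good: $D$ is a divisor in the regular $K$-scheme $X$, so by absolute purity \cite{Fu} and a d\'evissage along a stratification of $D$ into locally closed subschemes regular over $K$ of codimension $c\ge 1$ in $X$ (using alterations \cite{deJ2} where a stratum must be replaced by a regular proper model), this group is assembled by extensions out of Tate twists $H^a(\Sigma^u)(-c)$ with $\Sigma/K$ regular and proper, $a\ge 0$, $c\ge 1$; since $F$ acts on $H^a(\Sigma^u)$ with algebraic-integer eigenvalues by \cite[Appendix]{Ept} and Deligne's integrality theorem \cite[Corollaire~5.5.3]{DeInt}, each such twist, hence $H^{i+1}_{D_1^u}(X_1^u)$, is good. (iii) $H^i_{E^u}(Y^u)=H^i_{L^u}(Z^u)^G$ is good for $i\ge 1$: $Z=\sZ\otimes_R K$ is regular, being an open subscheme of the regular $\sZ$, and $L=D\times_X Z$ is a divisor in $Z$, so the same purity d\'evissage makes $H^i_{L^u}(Z^u)$ good, and taking $G$-invariants preserves goodness.

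Then comes the diagram chase. Since $\sC$ has cohomology sheaves in positive degree supported on $D_1$, one has \eqref{2.5}: $H^j(X_1^u,\sC)=H^j_{D_1^u}(X_1^u,\sC)$. Taking cohomology with supports in $D_1^u$ of the triangle $\sC\to\Q_\ell\to R\tau_*\Q_\ell\to\sC[1]$ on $X_1^u$, and using $H^j_{D_1^u}(X_1^u,R\tau_*\Q_\ell)=H^j_{E^u}(Y^u)$ (since $E=\tau^{-1}(D_1)$), I obtain for each $j\ge 2$ an exact sequence
\[ H^{j-1}_{E^u}(Y^u)\longrightarrow H^j_{D_1^u}(X_1^u,\sC)\longrightarrow H^j_{D_1^u}(X_1^u)\longrightarrow H^j_{E^u}(Y^u), \]
whose outer terms are good by (ii) and (iii); hence $H^j(X_1^u,\sC)=H^j_{D_1^u}(X_1^u,\sC)$ is good for all $j\ge 2$. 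Taking global cohomology of the same triangle on $X_1^u$ gives
\[ H^i(X_1^u,\sC)\longrightarrow H^i(X_1^u)\xrightarrow{\ \tau^*\ }H^i(Y^u)\longrightarrow H^{i+1}(X_1^u,\sC), \]
which exhibits $H^i(Y^u)$, for $i\ge 1$, as an extension of a subspace of $H^{i+1}(X_1^u,\sC)$ (good, by the previous step, since $i+1\ge 2$) by a quotient of $H^i(X_1^u)$ (good by (i)); hence $H^i(Y^u)$ is good for all $i\ge 1$. As $sp^u$ is $F$-equivariant by \eqref{2.2}, ${\rm Im}(sp^u)$ is an $F$-stable subspace of $H^i(Y^u)$, hence also good; this is the claim.

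I expect the main obstacle to be the purity d\'evissage behind (ii) and (iii): over the possibly imperfect local field $K$ one must arrange a stratification of $D\subset X$ (and of $L\subset Z$) into pieces regular over $K$, replace them where necessary by regular proper $K$-schemes via \cite{deJ2}, check that absolute purity \cite{Fu} applies stratum by stratum, and confirm that the integrality statement of \cite[Appendix]{Ept} is genuinely available for each resulting $H^a(\Sigma^u)$ --- all the while keeping each Tate twist by a strictly positive codimension, so that the factor of $q$ really appears. Granting this, the diagram chase is formal; the one conceptual change from \cite{Ereg} is that $H^i(Y^u)$ is recovered by extensions rather than as a direct summand, which is harmless since goodness is extension-closed.
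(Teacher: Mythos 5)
Your proposal is correct and follows essentially the same route as the paper: the same factorization $\sigma=\epsilon\circ\tau$ through the normalization $X_1$, the same cone $\sC$ with \eqref{2.5}, the same three inputs (integrality for $H^i(X^u)=H^i(X_1^u)$, purity plus integrality for $H^{i+1}_{D_1^u}(X_1^u)$ and for $H^i_{E^u}(Y^u)=H^i_{L^u}(Z^u)^G$), and the same chase through the two long exact sequences of \eqref{2.6}. Your write-up merely makes explicit the d\'evissage behind the purity step and the closure of ``goodness'' under subquotients, extensions and $G$-invariants, which the paper leaves implicit.
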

So we conclude now as usual that all the eigenvalues of $F$ acting on $H^i(\bar B)$ lie in $q\cdot  \bar{\Z}$ for $i\ge 1$, thus the Grothendieck-Lefschetz
trace formula applied to $H^*(\bar B)$, together with the absolute irreducibility of $B$,  imply the congruence. 
This finishes the proof of Theorem \ref{thm1.1}. 
\section{Construction of examples}
This section is devoted to the proof of Theorem \ref{thm1.2}.

Let us first recall that if $E$ is a smooth genus 1 curve over a finite field $\F_q$, it is always an elliptic curve, which means that it always carries a $\F_q$-rational point.  Furthermore one has
\begin{claim} \label{claim3.1} 
Given an elliptic curve $E/\F_q$, there is a finite field extension $\F_{q^n}\supset \F_q$ such that $|E(\F_{q^n})|$ is not congruent to $1$ modulo $q^n$. 

\end{claim}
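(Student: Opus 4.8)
The plan is to use the Hasse bound together with the description of $|E(\F_{q^n})|$ in terms of the Frobenius eigenvalues. Write $|E(\F_q)| = q + 1 - a$ with $a = \alpha + \bar\alpha$, where $\alpha, \bar\alpha$ are the two eigenvalues of the geometric Frobenius acting on $H^1(\bar E)$; they are complex conjugate of absolute value $\sqrt q$, so $\alpha\bar\alpha = q$. Then for every $n\ge 1$ one has $|E(\F_{q^n})| = q^n + 1 - (\alpha^n + \bar\alpha^n)$. The congruence $|E(\F_{q^n})|\equiv 1 \bmod q^n$ is therefore equivalent to $q^n \mid (\alpha^n + \bar\alpha^n)$ in the ring of integers of $\Q(\alpha)$ (or just in $\Z$, since $\alpha^n + \bar\alpha^n\in\Z$). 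So it suffices to produce one $n$ with $q^n \nmid (\alpha^n+\bar\alpha^n)$.

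First I would dispose of the trivial obstruction: if $a=0$ (supersingular-type trace), then $\alpha^2 = -q$, so $\alpha^2 + \bar\alpha^2 = -2q$, and already $|E(\F_{q^2})| = q^2 + 1 + 2q = (q+1)^2$, which is not $\equiv 1\bmod q^2$ as long as $q\nmid 2q+1$, i.e. always. (If one is nervous about $p=2$ one checks $q^2\nmid 2q$ directly, which holds since $q\ge 2$.) Next, assume $a\ne 0$. I would take $n = 1$ whenever $q\nmid a$; then $|E(\F_q)| = q+1-a\equiv 1-a\not\equiv 1\bmod q$, and we are done. The remaining case is $a\ne 0$ and $q\mid a$; but the Hasse bound gives $|a|\le 2\sqrt q$, which forces $q\le 2\sqrt q$, i.e. $\sqrt q\le 2$, i.e. $q\in\{2,3,4\}$, and moreover $|a|$ is a nonzero multiple of $q$ with $|a|\le 2\sqrt q<q+1$ — impossible unless $|a|=q$, which needs $q\le 2\sqrt q$ again, so $q\le 4$. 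Each of these finitely many exceptional pairs $(q,a)$ with $|a|=q\le 4$ can be handled by hand: compute $\alpha^n + \bar\alpha^n$ for small $n$ using the recursion $s_{n+1} = a\,s_n - q\,s_{n-1}$, $s_0=2$, $s_1=a$, and exhibit an $n$ with $q^n\nmid s_n$ (for instance $s_2 = a^2 - 2q$, and $q^2\mid a^2-2q$ would force $q\mid 2$, handling $q=3,4$ immediately; the case $q=2$, $a=\pm2$ is checked with $s_2 = 4-4=0$... here one goes to $s_3 = a s_2 - q s_1 = \mp 4$, and $2^3=8\nmid 4$).

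The main obstacle, such as it is, is purely bookkeeping: making sure the case division by the size of $a$ modulo $q$ is exhaustive and that the finitely many small-$q$ exceptions genuinely admit a witness $n$. There is no deep input beyond the Hasse–Weil bound and the Grothendieck–Lefschetz trace formula for the curve $E$; in particular, in contrast with the earlier parts of the paper, no alteration or integrality theorem is needed here. I would present the argument in the streamlined form: "choose $n$ so that $\alpha^n + \bar\alpha^n$ is a nonzero integer of absolute value $< q^n$", observing that $|\alpha^n + \bar\alpha^n|\le 2q^{n/2} < q^n$ for $n\ge 3$ (and for all $n\ge 1$ once $q\ge 5$), so the only way to have $q^n\mid(\alpha^n+\bar\alpha^n)$ for such $n$ is to have $\alpha^n + \bar\alpha^n = 0$; and $\alpha^n = -\bar\alpha^n$ together with $|\alpha|^2 = q$ forces $\alpha^{2n}=(-1)\bar\alpha^n\alpha^n\cdot(\alpha^n/\bar\alpha^n)$... more cleanly, $\alpha/\bar\alpha$ is then a $2n$-th root of $-1$, which cannot happen for all large $n$ simultaneously unless $\alpha/\bar\alpha$ is a root of unity, and even then one $n$ with $\alpha^n+\bar\alpha^n\ne 0$ exists unless $\alpha^n+\bar\alpha^n$ vanishes for a full arithmetic progression — ruled out since $s_n$ satisfies a nondegenerate linear recursion with $s_0 = 2\ne 0$. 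Picking that $n$ finishes the claim.
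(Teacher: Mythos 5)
Your strategy is genuinely different from the paper's. The paper never looks at the archimedean size of the Frobenius trace: it observes that the congruence for all $n$ would force both eigenvalues $\lambda$ and $q^n/\lambda$ of $F^n$ on $H^1(\bar E)$ (paired by Poincar\'e duality) to lie in $q^n\cdot\bar{\Z}$, which is impossible since their product is exactly $q^n$ and $1/q^n$ is not an algebraic integer; no case distinctions occur. Your route via the Hasse bound --- $|s_n|=|\alpha^n+\bar\alpha^n|\le 2q^{n/2}<q^n$ for $n\ge 3$, so $q^n\mid s_n$ forces $s_n=0$, and $s_n$ cannot vanish at two consecutive indices, so one of $n=3,4$ is a witness --- is also a correct, elementary proof once it is stated in exactly that form, and it avoids the passage from the congruences for all $n$ to divisibility of the eigenvalues themselves.

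As written, though, there are two genuine slips. First, in the case $a=0$ you claim $n=2$ always works and ``check'' that $q^2\nmid 2q$ for all $q\ge 2$; this is false for $q=2$, where $q^2=2q=4$, and indeed for a curve with $a=0$ over $\F_2$ (e.g. $y^2+y=x^3$) one has $|E(\F_4)|=(q+1)^2=9\equiv 1 \bmod 4$, so your proposed witness fails there and one must go to $n=4$. Second, the final justification of the streamlined version (``vanishing on a full arithmetic progression is ruled out since $s_0=2\ne 0$'') is not valid: in that same supersingular example $s_n$ \emph{does} vanish on the full arithmetic progression of odd $n$, and $s_0\ne 0$ does not prevent this. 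What you actually need, and what is true, is only that $s_n$ cannot vanish at two consecutive indices (otherwise $\alpha/\bar\alpha=1$, whence $s_n=2\alpha^n\ne 0$), so some $n\in\{3,4\}$ has $s_n\ne 0$ and then $q^n\nmid s_n$ by the size bound. With that repair your argument is complete; the paper's duality-plus-integrality argument reaches the same conclusion with no low-$q$ bookkeeping at all.
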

\begin{proof}
By the trace formula, 
$|E(\F_{q^n})|$  being congruent to $1$ modulo $q^n$ for all $n\ge 1$ is equivalent to saying that the eigenvalues of $F^n$ acting on $H^i(\bar E)$ lie in $q^n\cdot \bar{\Z}$ for all $n\ge 1$ and $i\ge 1$. 
By purity (which in dimension 1 is Weil's theorem), this is equivalent to saying that the 
eigenvalues of $F^n$ acting on $H^1(\bar E)$ lie in $q^n\cdot \bar{\Z}$ for all $n\ge 1$. On the other hand, by duality, if $\lambda$ is an eigenvalue, then $\frac{q^n}{\lambda}$ is also an eigenvalue. This is then impossible that both $\lambda$ and 
$\frac{q^n}{\lambda}$ be $q^n$-divisible as algebraic integers. 

\end{proof}
We now construct the following scheme. Let us set $\sP_0:=\P^2$ over $R_0:=\Z_p$ or over $\F_p[[t]]$. Choose an elliptic curve $E_0 \subset \sP\otimes \F_p=\P^2_{\F_p}$ defined over $\F_p$. Let $k\supset \F_p$ be a finite field extension such that $|E_0(k)|$ is not $k$-divisible (Claim \ref{claim3.1}). Set $E:=E_0\otimes_{\F_p} k, \ \sP:=\sP_0\otimes_{R_0} R$, with $R=W(k)$ or $\F_q[[t]]$, and $K={\rm Frac}(R)$. 
Choose a smooth projective curve  $\sC\subset \sP$ over $R$, of degree $\ge 4$, such that $C:=\sC\otimes k$ is transversal to $E$. Define $\Sigma=E\cap C \subset E$ to be the 
 0-dimensional intersection subscheme. It has degree $\ge 12$, thus in particular $>9$.
Let $b: \sY\to \sP$ be the blow up of $\Sigma \subset \sP$. We denote by $P_\Sigma$ the exceptional locus, which is a trivial $\P^2$ bundle over $\Sigma$, by $Y$ the strict transform of $\P^2_{k}$, and we still denote by $E\subset Y$ the strict transform of the elliptic curve. Then the conormal bundle $N^\vee_{E/\sY}$ of $E$ in $\sY$ is an extension of the conormal bundle 
$N^\vee_{E/Y}$ of $E$ in $Y$ by the restriction to $E$ of the conormal bundle $N^\vee_{Y/\sY}$ of $Y$ in $\sY$, both ample line bundles on $E$ by the condition on the degree of $\Sigma$. 

Let $I\subset \sO_{\sY}$ be the ideal sheaf of $E$. For a coherent sheaf $\sF$ on $\sY$, we denote by $I^n/I^{n+1}\cdot \sF$ the image of $I^n/I^{n+1}\otimes_{\sO_{\sY}} \sF$ in $\sF$, where $n\in \N$.
\begin{claim}\label{claim3.2}
For every coherent sheaf $\sF$ on $\sY$, one has $H^1(E, I^n/I^{n+1}\cdot \sF)=0$ for all $n\in \N$ large enough. 
\end{claim}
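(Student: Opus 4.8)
The plan is to reduce the vanishing, by means of the extension of conormal bundles recalled just before the statement, to Serre vanishing for line bundles of large degree on the elliptic curve $E$ twisted by a fixed coherent sheaf. Put $\sG:=\sF\otimes_{\sO_\sY}\sO_E=\sF/I\sF$, a fixed coherent sheaf on $E$. For any closed immersion there is a canonical surjection $\sym^n(I/I^2)\surj I^n/I^{n+1}$ of $\sO_E$-modules; tensoring it with $\sF$ over $\sO_\sY$ (which identifies $\sym^n(I/I^2)\otimes_{\sO_\sY}\sF$ with $\sym^n(N^\vee_{E/\sY})\otimes_{\sO_E}\sG$, using $N^\vee_{E/\sY}=I/I^2$ and the fact, recalled above, that this is locally free of rank $2$), we find that $I^n/I^{n+1}\cdot\sF$ is a quotient of $\sym^n(N^\vee_{E/\sY})\otimes_{\sO_E}\sG$. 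Since $\dim E=1$ one has $H^2(E,-)=0$, so for any surjection $\sB\surj\sC$ of coherent sheaves on $E$ the map $H^1(E,\sB)\to H^1(E,\sC)$ is onto; hence it suffices to show $H^1(E,\sym^n(N^\vee_{E/\sY})\otimes_{\sO_E}\sG)=0$ for all $n\gg0$.

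Next I would strip the symmetric power. The recalled extension $0\to A_1\to N^\vee_{E/\sY}\to A_2\to0$, with $A_1:=N^\vee_{Y/\sY}|_E$ and $A_2:=N^\vee_{E/Y}$ ample line bundles on $E$ (hence of degree $\ge1$), induces on $\sym^n(N^\vee_{E/\sY})$ a filtration whose graded pieces are the line bundles $A_1^{\otimes j}\otimes A_2^{\otimes(n-j)}$, $0\le j\le n$, each of degree $\ge n$. As these graded pieces are locally free, tensoring the filtration with $\sG$ stays exact, so $\sym^n(N^\vee_{E/\sY})\otimes_{\sO_E}\sG$ carries a filtration with graded pieces $\sG\otimes_{\sO_E}L$ for line bundles $L$ with $\deg L\ge n$. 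By the long exact cohomology sequences, it then suffices to produce an integer $n_0$, depending only on $\sG$, such that $H^1(E,\sG\otimes_{\sO_E}L)=0$ for every line bundle $L$ on $E$ of degree $\ge n_0$.

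This last point is Serre vanishing, in a form valid for sheaves that need not be locally free. Fix an ample $\sO_E(1)$ on $E$, of degree $e\ge1$. For $d$ large, $\sG\otimes\sO_E(1)^{\otimes d}$ is globally generated, so there is a surjection $\sO_E(-d)^{\oplus r}\surj\sG$; let $\sK$ be its kernel. Tensoring with a line bundle $L$ is exact and $H^2(E,\sK\otimes L)=0$, so $H^1(E,\sG\otimes L)$ is a quotient of $H^1(E,L\otimes\sO_E(-d))^{\oplus r}$; and a line bundle of positive degree on the genus-$1$ curve $E$ has vanishing $H^1$ (e.g.\ by Serre duality, $\deg\omega_E=0$), so $H^1(E,L\otimes\sO_E(-d))=0$ once $\deg L>ed$. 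Thus $n_0:=ed+1$ works, and, chaining the three steps, $H^1(E,I^n/I^{n+1}\cdot\sF)=0$ for all $n>ed$.

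The substance of the argument is geometric rather than homological: it is the positivity of $N^\vee_{Y/\sY}|_E$ and $N^\vee_{E/Y}$ — coming from the condition $\deg\Sigma>9$ — that forces the degrees of the line bundles occurring in $\sym^n(N^\vee_{E/\sY})$ to grow with $n$, uniformly in $j$; everything else is formal. The only point demanding mild care, rather than a genuine obstacle, is that $\sF$ (hence $\sG$) is an arbitrary coherent sheaf, possibly with torsion and not locally free, so one cannot dualize a vector bundle directly — the resolution $\sO_E(-d)^{\oplus r}\surj\sG$ is precisely what sidesteps this. The identification $\sym^n(N^\vee_{E/\sY})=I^n/I^{n+1}$ holds because $E\hookrightarrow\sY$ is a regular immersion ($\sY$ regular, $E$ regular), but only the surjection $\sym^n(N^\vee_{E/\sY})\surj I^n/I^{n+1}$, valid for every closed immersion, is actually used.
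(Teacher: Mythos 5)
Your proof is correct and follows essentially the same route as the paper: pass to the (co)normal filtration of $I^n/I^{n+1}$ whose graded pieces are line bundles of degree $\ge n$ (coming from the ampleness of $N^\vee_{Y/\sY}|_E$ and $N^\vee_{E/Y}$), and conclude by vanishing of $H^1$ of a fixed coherent sheaf twisted by large-degree line bundles on the elliptic curve. The only cosmetic difference is in handling a general coherent sheaf: you resolve $\sG=\sF|_E$ by $\sO_E(-d)^{\oplus r}$ and use $H^2(E,-)=0$, whereas the paper splits off the maximal torsion subsheaf and reduces to the locally free case; both are standard and equivalent in substance.
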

\begin{proof}
As by definition one has a surjection $I^n/I^{n+1}\otimes_{\sO_{\sY}} \sF \surj 
I^n/I^{n+1}\cdot \sF$, it is enough to show $H^1(E, I^n/I^{n+1}\otimes_{\sO_{\sY}} \sF)=0$ for $n$ large enough. As $I^n/I^{n+1}$ is locally free, $I^n/I^{n+1}\otimes_{\sO_{\sY}} \sF$ is an extension of $I^n/I^{n+1}\otimes_{\sO_{\sY}} \sF_0$ by 
$I^n/I^{n+1}\otimes_{\sO_{\sY}} \sT$, where $\sT\subset \sF$ is the maximal torsion subsheaf and 
$\sF_0=\sF/\sT$ is locally free. As $H^1(E, I^n/I^{n+1}\otimes_{\sO_{\sY}} \sT)=0$, we may assume that $\sF$ is locally free. 
As $I^n/I^{n+1}$ is a locally free filtered sheaf, with associated graded a sum of ample line bundles of strictly increasing degree as $n$ grows, we have $H^1(E, {\rm gr}(I^n/I^{n+1})\otimes_{\sO_{\sY}} \sF)=0$ for $n$ large enough, and thus $H^1(E, I^n/I^{n+1}\otimes_{\sO_{\sY}} \sF)=0$ as well. 

\end{proof}
Artin's contraction criterion 
\cite[Theorem~6.2]{artin}
applied to $E\to {\rm Spec}(k)$, together with Artin's existence theorem \cite[Theorem~3.1]{artin} show the existence of a contraction
\ga{3.1}{a_1: \sY\to \sX_1}
where $\sX_1$ is an algebraic space over $R$, $a_1|_{\sY\setminus E}$ is an isomorphism and $a_1(E)={\rm Spec}(k)$. Let $\sX\xrightarrow{\nu} \sX_1$ be the normalization of $\sX_1$ in 
$K(\sY)=K(\sP)$. This is a normal  algebraic space over $R$. One has a diagram 
\ga{3.2}{\xymatrix{\ar[d]_b \sY \ar@/^1pc/[rr]^{a_1} \ar[r]_a & \sX \ar[r]_\nu & \sX_1\\
\sP
}
}
\begin{claim} \label{claim3.3}
$|\sX(k)|$ is not congruent to $1$ modulo $|k|$. 
\end{claim}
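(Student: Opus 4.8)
The strategy is to compare $\sX$ with the regular model $\sY$ produced by the blow-up $b$ in \eqref{3.2}, for which the congruence of \cite[Theorem~1.1]{Ept} applies, and to show that passing from $\sY$ to $\sX$ changes the count of $k$-rational points by exactly $|E(k)|-1$. First I would check that $\sY$ is a regular, flat, projective model of $X=\P^2_K$ over $R$: it is integral, being the blow-up of the integral scheme $\sP=\P^2_R$ along a nonempty center; it is regular, since $\sP$ is regular and $\Sigma$ is a regularly embedded regular closed subscheme of $\sP$ (each point of $\Sigma$ is cut out in $\sP$ by a regular sequence of length $3=\dim\sP$, namely a uniformizer of $R$ together with local equations of the two transversal curves $E_0$ and $C$ in $\P^2_k$); it is flat over $R$, being integral and dominating $\Spec R$; and its generic fibre is $\P^2_K$ because $\Sigma$ lies in the special fibre. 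Moreover $\P^2$ satisfies the hypothesis of Theorem~\ref{thm1.1}: $H^i(\overline{\P^2})$ vanishes for $i$ odd, $H^2$ is generated by the class of a line and so is supported in codimension $1$, and $H^4$ is supported on a point, hence in codimension $\ge 1$. Therefore \cite[Theorem~1.1]{Ept} applies to $\sY$ and yields $|\sY(k)|\equiv 1$ modulo $|k|$.

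Next I would analyse the morphism $a\colon \sY\to\sX$ of \eqref{3.2}. Since $a_1|_{\sY\setminus E}$ is an isomorphism, $\sX_1\setminus a_1(E)$ is regular, hence normal, so $\nu$ is an isomorphism over it; combined with the fact that $a$ is proper and birational with $\nu\circ a=a_1$, this shows $a$ restricts to an isomorphism $\sY\setminus E\xrightarrow{\ \sim\ }\sX\setminus\{x_0\}$, where $x_0:=a(E)$ is a single closed point, being connected (the image of the connected $E$) and contained in the finite set $\nu^{-1}(a_1(E))$. I then claim $\kappa(x_0)=k$. On one hand, $E\cong E_0\otimes_{\F_p}k$ is an elliptic curve over the finite field $k$ and so has a $k$-point; composing it with $E\hookrightarrow\sY\xrightarrow{a}\sX$ gives a $k$-point of $\sX$ with image $x_0$, whence $\kappa(x_0)\hookrightarrow k$. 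On the other hand $\nu(x_0)=a_1(E)=\Spec(k)$, whence $k\hookrightarrow\kappa(x_0)$. Hence $x_0\in\sX(k)$.

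It remains to count. Since $a$ is an isomorphism over $\sX\setminus\{x_0\}$ and $x_0$ is $k$-rational, a $k$-point of $\sX$ either equals $x_0$ (one such) or lies in $\sX\setminus\{x_0\}\cong\sY\setminus E$; and a $k$-point of $\sY$ lying on the closed subscheme $E$ is exactly a point of $E(k)$. Therefore $|\sX(k)|=1+|(\sY\setminus E)(k)|=1+|\sY(k)|-|E(k)|$, so that $|\sX(k)|-1\equiv 1-|E(k)|$ modulo $|k|$. By the choice of $k$ via Claim~\ref{claim3.1}, $|E(k)|=|E_0(k)|$ is not congruent to $1$ modulo $|k|$, so the right-hand side is nonzero modulo $|k|$, proving that $|\sX(k)|$ is not congruent to $1$ modulo $|k|$. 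Note that $\sX$ nevertheless has the $k$-rational point $x_0$, in agreement with Theorem~\ref{thm1.1}.

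The point to handle with care is that all of this takes place in the category of algebraic spaces rather than schemes; but every ingredient used — finiteness of the set of $k$-points of a proper algebraic space over a finite field together with the additivity of the count over a closed point and its open complement, the fact that a normalization is an isomorphism over the regular locus, and the behaviour of $a$ over the normal scheme $\sY$ — holds verbatim in that generality. The essential input, besides these, is the exact computation $\kappa(x_0)=k$: were $x_0$ not $k$-rational, the change in the point count would be different, and in view of \cite[Theorem~1.1]{Ept} for regular models this is exactly the step that pins the failure of the congruence to the non-regularity of $\sX$.
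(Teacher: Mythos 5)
Your proof is correct and follows essentially the same route as the paper: the congruence $|\sY(k)|\equiv 1$ from \cite[Theorem~1.1]{Ept}, the contraction of $E$ to a single $k$-rational point, and Claim~\ref{claim3.1} combine to give $|\sX(k)|\equiv 2-|E(k)|\not\equiv 1$ modulo $|k|$. The only cosmetic difference is that you count directly on $\sX$ by pinning down $\kappa(a(E))=k$, whereas the paper counts on $\sX_1$ and transfers the count via the observation that the normalization $\nu$ is a homeomorphism (using that the fibers of $a_1$ are absolutely irreducible).
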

\begin{proof}
By \cite[Theorem~1.1]{Ept} (or by a simple computation in this case),  $|\sY(k)|$ is congruent to $1$ modulo $|k|$.  By Claim \ref{claim3.1} and the choice of $E$, $|\sX_1(k)|$ is not congruent to $1$ modulo $|k|$. On the other hand, as the fibers of $a_1$ are absolutely irreducible, $\nu$ has to be a homeomorphism. Thus $|\sX(k)|=|\sX_1(k)|$. This finishes the proof.  

\end{proof}
In order to finish the proof of Theorem \ref{thm1.2}, it remains to show
\begin{claim} \label{claim3.4}
$\sX\to {\rm Spec}(R)$ is a model of $X=\P^2/K$. 
\end{claim}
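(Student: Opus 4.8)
The plan is to verify the three defining properties of a model for $\sX\to\Spec R$: that the generic fibre is $\P^2_K$, that the morphism is flat, and that $\sX$ is an integral projective $R$-scheme (it is already normal, being constructed as a normalization). The first two are routine; the content is in the projectivity.

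First the generic fibre. The centre $\Sigma$ of the blow-up $b$ is a $0$-dimensional subscheme of the special fibre $\sP\otimes k=\P^2_k$, hence $b$ induces an isomorphism $\sY\otimes_RK\xrightarrow{\ \sim\ }\sP\otimes_RK=\P^2_K$. Likewise $E\subset Y=\sY\otimes k$ lies in the special fibre of $\sY$ and $a_1$ is an isomorphism over $\sY\setminus E$, so $a_1$, and therefore $a$ and $\nu$, are isomorphisms over $\Spec K$; composing, $\sX\otimes_RK\cong\P^2_K=X$. For integrality and flatness: $\sP=\P^2_R$ is integral and $b,a_1$ are dominant, so $\sX_1$ is integral with function field $K(\sP)$; thus $\sX$, being its normalization in $K(\sP)$, is integral and normal. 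As the generic point of $\sX$ lies over the generic point of $\Spec R$, every local ring of $\sX$ embeds in $K(\sX)\supseteq K$ and is therefore a torsion-free, hence flat, $R$-module, so $\sX\to\Spec R$ is flat.

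Now projectivity. Put $d=\deg\sC\ge 4$, note $\sP$ is regular so $\Sigma$ is regularly embedded, and consider $\sL:=b^*\sO_\sP(d)\otimes\sO_\sY(-P_\Sigma)$ on $\sY$. Via $E\cong E_0$ one has $\sL|_E\cong\sO_{E_0}(dH-\Sigma)$, and since $\Sigma=\sC\cap E_0\sim d\cdot(H|_{E_0})$ by Bézout this gives $\sL|_E\cong\sO_{E_0}$: the restriction is exactly trivial, not merely of degree $0$ (here one also uses $\deg\Sigma=3d$). Because $N^\vee_{E/\sY}$ is an extension of the ample line bundles $N^\vee_{E/Y}$ (degree $\deg\Sigma-9>0$) and $N^\vee_{Y/\sY}|_E$ (degree $\deg\Sigma>0$), it is ample, so $H^1(E,\sym^jN^\vee_{E/\sY})=0$ for all $j\ge 1$; hence $\sL$ is trivial on every infinitesimal neighbourhood of $E$ in $\sY$ and, using $a_{1*}\sO_\sY=\sO_{\sX_1}$ and the formal-functions description of $\sX_1$ near $a_1(E)$, it descends to a line bundle $\sM_1$ on $\sX_1$ with $a_1^*\sM_1=\sL$. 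By the projection formula, $H^0(\sX_1,\sM_1^{\otimes m})=H^0(\sY,\sL^{\otimes m})=H^0(\P^2_R,\sO(md)\otimes\sI_\Sigma^m)$. The plan is then to show $\sL$ is semiample on $\sY$ with the associated contraction equal to $a_1$, equivalently that $\sM_1$ is ample on $\sX_1$. Granting this, a proper algebraic space over the affine base $\Spec R$ carrying an ample invertible sheaf is a projective scheme, so $\sX_1$, and hence its finite cover $\sX$, is projective over $R$; together with the generic-fibre and flatness statements this shows $\sX/R$ is a (normal) model of $\P^2/K$.

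The main obstacle is the semiampleness of $\sL$ (equivalently the ampleness of $\sM_1$). Over the generic point $\sL|_{\sY_K}=\sO_{\P^2_K}(d)$ is ample, so the issue is concentrated on the special fibre $\sY\otimes k=Y\cup P_\Sigma$, with $Y=\mathrm{Bl}_\Sigma\P^2_k$. On each component $P_s\cong\P^2$ one has $\sL|_{P_s}=\sO_{\P^2}(1)$, ample; on the surface $Y$, $\sL|_Y=\pi_Y^*\sO_{\P^2_k}(d)\otimes\sO_Y(-\textstyle\sum_s e_s)$ is nef, the only curve on which it has degree $0$ is $E$, and $(\tilde{E}_0)^2=9-\deg\Sigma<0$ since $\deg\Sigma\ge 12$; by Artin--Grauert contractibility of a curve of negative self-intersection on a surface, $\sL|_Y$ is semiample with contraction exactly $Y\to Y_1$ collapsing $E$. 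One then has to propagate semiampleness from the special fibre to $\sY$ (lifting sections of $\sL^{\otimes m}|_{\sY\otimes k}$ using completeness of $R$ and suitable cohomology vanishing) and to identify the resulting contraction of $\sY$ with $\sX_1$ by uniqueness of contractions to normal targets. The hypothesis $\deg\Sigma\ge 12>9$ is precisely what makes $N^\vee_{E/\sY}$ ample and $(\tilde{E}_0)^2<0$, and the transversality of $E\cap C$ is what makes sections of $\sL^{\otimes m}$ separate the normal directions to $E$, so that the contraction collapses $E$ to a reduced point rather than to something larger.
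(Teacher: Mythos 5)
Your reduction of the statement to projectivity, the verification of the generic fibre and of flatness (torsion-freeness over the DVR), and the descent of the line bundle $b^*\sO_{\sP}(\sC)(-P_\Sigma)$ using the exact triviality of its restriction to $E$ together with $H^1(E,I^{m+1}/I^{m+2})=0$ all match the paper (the paper packages the descent via Keel's Lemma~1.10 applied to $a:\sY\to\sX$, with an inductive lifting of the trivializing section over the thickenings $E_m$; note it descends to $\sX$, not to $\sX_1$, which also sidesteps your appeal to ``uniqueness of contractions to normal targets'' -- $\sX_1$ is not known to be normal, which is exactly why $\sX$ is introduced). The genuine gap is in your final step, where you propose to prove semiampleness of $\sL$ on $\sY$ by lifting sections of $\sL^{\otimes m}|_{\sY\otimes k}$ over the complete base ``using suitable cohomology vanishing.'' The obstruction groups for such liftings are $H^1(\sY\otimes k,\sL^{\otimes m}|_{\sY\otimes k})$, and these do \emph{not} vanish, even for $m$ large: on the component $Y={\rm Bl}_\Sigma\P^2_k$ the bundle $\sL|_Y$ is nef and big but trivial on $E$, and contracting the elliptic curve $E$ produces a non-rational singularity, so $R^1$ of the contraction applied to $\sO_Y$ is a nonzero skyscraper; by the Leray spectral sequence $H^1(Y,\sL^{\otimes m}|_Y)$ receives this skyscraper (and equals it for $m\gg0$), hence is nonzero, and a Mayer--Vietoris computation with the component $P_\Sigma$ shows the same for the whole special fibre of $\sY$. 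So the vanishing your plan needs is false, and no argument is offered for why the particular obstructions vanish. A second, related inaccuracy: Artin--Grauert contractibility of the negative curve $E\subset Y$ does not by itself give semiampleness of $\sL|_Y$ -- the contracted space could fail to be projective; what is needed is Keel's characteristic-$p$ criterion.

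The repair is precisely the route the paper takes: do the section-lifting on $\sX$, not on $\sY$. After descending to a line bundle $\sL$ on $\sX$ with $a^*\sL=\sM$, the restriction $\sL|_{\sX\otimes k}$ is positive on \emph{every} curve (the only degree-zero curve $E$ has been contracted), so Keel's Corollary~0.3 gives that it is ample on the special fibre of $\sX$. There Serre vanishing applies, giving $H^1(\sX\otimes k,\sL^{\otimes m}|_{\sX\otimes k})=0$ for $m\gg0$; cohomology and base change plus Nakayama yield $H^1(\sX,\sL^{\otimes m})=0$, and the multiplication-by-$\pi$ sequence then shows $H^0(\sX,\sL^{\otimes m})\surj H^0(\sX\otimes k,\sL^{\otimes m}|_{\sX\otimes k})$, so the linear system is base point free and embeds $\sX$ into $\P^N_R$ because it embeds the special fibre. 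In short: the ampleness must be exploited downstairs on $\sX\otimes k$, where it is available, rather than the semiampleness upstairs on $\sY\otimes k$, where the needed $H^1$-vanishing fails.
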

\begin{proof}
We have to show that $\sX\to {\rm Spec}(R)$ is a flat projective morphism. 
Since $\sX$ is reduced, ${\rm Spec}(R)$
is regular of dimension 1,  then  \cite[IV~Proposition~14.3.8]{EGA} allows to conclude that $\sX/R$ is flat. Thus we just have   to show
that $\sX/R$ is projective.  
To this aim, 
we want to decend a line bundle from $\sY$ to $\sX$. 
Let us define the line bundle $\sM:=b^*\sO_{\sP}(\sC)(-P_\Sigma)$ on  $\sY$. By definition, one has
\ga{3.3}{ \sM|_E\cong \sO_E.}
\begin{claim} \label{claim3.5}
The line bundle $\sM$ descends to $\sX$, that is 
 there is a line bundle $\sL$ on $\sX$ with $a^*\sL=\sM$. 
\end{claim}
\begin{proof}[Proof of Claim \ref{claim3.5}]
The proper morphism of algebraic spaces $a: \sY\to \sX$, with $a_*\sO_{\sY}=\sO_{\sX}$, has the property that $a^{-1}a(E)=E$ set-theoritically, that $a|_{\sY\setminus E}: \sY\setminus E\to \sX\setminus a(E)$ is an isomorphism, and that $H^1(E, I^n/I^{n+1})=0$ for $n\ge 1$. So Keel's theorem \cite[Lemma~1.10]{keel} asserts that some positive power $\sM^{\otimes r}$ descends to $\sX$ if the following condition is fulfilled
\ga{3.4}{\forall m>0, \exists r(m)>0 \ {\rm s.t}  \ \sM^{\otimes r(m)}|_{E_m} \ {\rm descends \ to} \ 
a(E_m) \\ {\rm where}  \ E_m:={\rm Spec}(\sO_{\sY}/I^{m+1}). \notag}
So we just have to check that \eqref{3.4} is fulfilled with $r=1$ in our situation. 
The scheme $a(E_m)$ has Krull dimension $0$. 
Thus by Hilbert 90's theorem (see e.g. \cite[Corollary~11.6]{Milne}) one has
\ga{3.5}{{\rm Pic}(a(E_m))=0.} 
 We conclude that  to check \eqref{3.4} is equivalent to checking that $\sM^{\otimes r(m)}|_{E_m}\cong \sO_{E_m}$ for some positive power $r(m)$.  In fact one has
\ga{3.6}{\sM|_{E_m}\cong \sO_{E_m} \ \forall m\ge 1.}
For $m=1$, this is \eqref{3.3}. 
We argue by induction and assume that for $m>1$, we have a trivializing section  $s_m: \sO_{E_m}\xrightarrow{\cong} \sM|_{E_m}$. 
We want to show that it lifts to a trivializing section $s_{m+1}: \sO_{E_{m+1}}\xrightarrow{\cong} \sM|_{E_{m+1}}$.  
One has an exact sequence 
\ga{3.7}{0\to I^{m+1}/I^{m+2}\to \sM|_{E_{m+1}}\to \sM|_{E_m}\to 0.}
Since $H^1(E, I^{m+1}/I^{m+2})=0$, as $m\ge 0$, the trivializing section of $s_m: \sO_{E_m}\xrightarrow{\cong}  \sM|_{E_m}$ lifts to a section  
$s_{m+1}: \sO_{E_{m+1}}\to  \sM|_{E_{m+1}}$, and likewise, its inverse $t_m: \sM|_{E_m}\xrightarrow{\cong} \sO_{E_m}$ lifts to 
$t_{m+1}:  \sM|_{E_{m+1}} \to \sO_{E_{m+1}}$. The composite 
$t_{m+1}\circ s_{m+1}: \sO_{E_{m+1}} \to \sO_{E_{m+1}}$ lifts the identity of $\sO_{E_m}$. Therefore it is invertible. This shows that 
 $s_{m+1}$ trivializes. 
 The proof of Keel's theorem (see (2) after 
\cite[(1.10.1)]{keel}) shows then that one can take $r=1$.

\end{proof}
In order the finish the proof of Claim \ref{claim3.4}, 
it remains to see that $\sL$ on $\sX$ is ample. First, $\sL|_{\sX\otimes k}$ is ample because by \cite[Corollary~0.3]{keel}, this is enough to see that the linear system associated to  $\sL|_{\sX\otimes k}$ does not contract any curve, which is true by construction. So by Serre vanishing theorem, for sufficiently large $m$, $H^1(\sX\otimes k, \sL|_{\sX\otimes k}^{\otimes m})=0$. Base change implies  $H^1(\sX,\sL^{\otimes m})\otimes k=0$ 
(\cite[III~Theorem~7.7.5]{EGA}), thus by  Nakayama's lemma, one has 
\ga{3.8}{H^1(\sX,\sL^{\otimes m})=0 \ {\rm for \ } m \  {\rm large \ enough}.} 
As $\sL$ is invertible, multiplication $\sL^{\otimes m}\xrightarrow{\pi} \sL^{\otimes m}$  by the uniformizer $\pi$  is injective, with quotient $\sL|_{\sX\otimes k}^{\otimes m}$. Thus \eqref{3.8} implies surjectivity
$H^0(\sX, \sL^{\otimes m})\surj H^0(\sX\otimes k, \sL|_{\sX\otimes k}^{\otimes m})$ for $m$ large enough. Thus $H^0(\sX, \sL^{\otimes m})$ is a free $R$-module, and the linear system $H^0(\sX, \sL^{\otimes m})$ maps base point freely $\sX$ to $\P^N_R$, with $N+1={\rm rank}_R H^0(\sX, \sL^{\otimes m})$.
As it embedds $\sX\otimes k$, it embedds $\sX$ as well. 
This finishes the proof.

\end{proof}

\section{Dimension 1}
\begin{rmk} \label{rmk4.1}
 In Theorem \ref{thm1.1}, if $X/K$ has dimesnion 1, which means concretely if $X/K=\P^1/K$, then 
any normal model $\sX/R$ satisfies the congruence $|\sX(k)| \equiv 1$ modulo $|k|$. Thus the examples of Theorem \ref{thm1.2} have the smallest possible dimension.
\end{rmk}
\begin{proof}
 Indeed, using \eqref{2.1}, the only thing to check is that $H^1(\bar A)$, which is equal to $H^1(\sX^u)$, injects via $\sigma^*$ into $H^1(\bar B)=H^1(\sY^u)$.  Here 
 $A:=\sX\otimes_R k$. Let us denote by $\sX'$ the normalization of $\sX$ in $K(\sY)$, with factorization 
\ga{4.1}{\xymatrix{ \sY \ar@/^1pc/[rr]^{\sigma} \ar[r]_{\sigma'} & \sX' \ar[r]_\nu & \sX
}
}
and set $A':=A\times_{\sX} \sX'$. Then $\sigma'$ induces an isomorphism  $K(\sX')\xrightarrow{\cong} K(\sY)$. Furthermore, 
$\sX'\xrightarrow{\nu} \sX$ and and $A'\xrightarrow{\nu|_A} A$ are homeomorphisms. Thus 
$H^1(\sX^u)=H^1(\bar A)\xrightarrow{\nu^*} H^1((\sX')^u) =H^1(\bar{A'})$ is an isomorphism. 
On the other hand, since $\sigma'_*\Q_\ell=\Q_\ell$, the Leray spectral sequence for $\sigma'$ applied to  $H^1(\sY^u)$ yields an inclusion $H^1((\sX')^u)=H^1(\bar{A'})\xrightarrow{{\rm inj}} H^1(\sY^u)=H^1(\bar B)$. 
This finishes the proof.
\end{proof}

\bibliographystyle{plain}
\renewcommand\refname{References}

\end{document}